\documentclass[11pt,oneside,reqno]{amsart}
\usepackage{amsaddr} 
\usepackage{eurosym}
\usepackage{amsmath}
\usepackage[left=2.5cm, right=2.5cm, top=2.5cm, bottom=2.5cm]{geometry}
\usepackage{amsfonts}
\usepackage{amssymb}
\usepackage{bigints}
\usepackage{lmodern}
\usepackage{enumitem}
\usepackage{graphicx}
\usepackage{hyperref}
\usepackage{caption}
\usepackage{float}
\usepackage{tabularx}
\usepackage{url}
\usepackage{xfrac}
\usepackage{amsthm}
\usepackage{amscd}
\usepackage{amsfonts}
\usepackage{epsfig}
\usepackage{graphicx,psfrag}
\usepackage{mathtools}
\usepackage{color}
\usepackage{tikz}
\usepackage{changepage}

\usepackage{listings}
\usepackage{xcolor}

\usepackage{fancyvrb}

\lstdefinelanguage{Mathematica}{
  morekeywords={
    Module, If, Then, Else, Return, Print, Set, SetDelayed, Range,
    IntegerQ, Select, Flatten, Import, FileNames, MatchQ, Min, Max,
    ToExpression, Length, Complement, ParallelMap
  },
  sensitive=true,
  morecomment=[l](*),
  morestring=[b]",
}

\lstset{
  language=Mathematica,
  basicstyle=\ttfamily\small,
  keywordstyle=\color{blue}\bfseries,
  commentstyle=\color{gray}\itshape,
  stringstyle=\color{orange},
  showstringspaces=false,
  breaklines=true,
  frame=single
}



\newtheorem {theorem} {Theorem} [section]

\newtheorem {corollary} [theorem] {Corollary}
\newtheorem {conjecture}{Conjecture}
\newtheorem {remark}{Remark}
\newtheorem {lemma}{Lemma} [section]

\parskip 0.4cm

\title[The generalized Erd\H{o}s--Straus equation]{Almost a Complete Proof of the Generalized Erd\H{o}s--Straus Conjecture: {5}/{a} = {1}/{b} + {1}/{c} + {1}/{d}}
\date{\today}

\begin{document}
\author[B. Ghermoul]{Bilal Ghermoul$^{(1)}$}
\address{$^{(1)}$ Department of Mathematics, Faculty of Mathematics and Computer Science,\\ University Mohamed El Bachir El Ibrahimi  of Bordj Bou Arreridj,\\
     El-Anasser 34030, Algeria}
\email{bilal.ghermoul@univ-bba.dz}

\subjclass[2020]{11Dxx, 11Gxx, 14Gxx}

\maketitle

\begin{abstract}
The generalized Erd\H{o}s--Straus conjecture, proposed by Wac\l{}aw Sierpi\'{n}ski in 1956, asks whether the Diophantine equation
\[
\frac{5}{a} = \frac{1}{b} + \frac{1}{c} + \frac{1}{d}
\]
admits positive integer solutions $b,c,d \in \mathbb{N}$ for every integer $a \ge 2$.
In this work we present explicit solutions for all integers $a \ge 2$. 
We begin with the simplest known cases where $a \equiv i \pmod{5}$ for $i \in \{0,2,3,4\}$, providing direct decompositions. 
The remaining open case, $a \equiv 1 \pmod{5}$, is addressed for $a = 5q + 1$ with $q \not\equiv 0 \pmod{252}$, where we give explicit decompositions, often with $q$ expressed as three-variable polynomials.
For $q \equiv 0 \pmod{252}$, we conjecture that a specific polynomial 
$p_{1}(x,y,z)=z (x (5 y-1)-y)-x,~ x,y,z \in \mathbb{N}^*$, which exactly satisfies the generalized Erd\H{o}s--Straus 
equation, generates all such multiples of $252$. This conjecture has been 
verified computationally for $5q+1$ up to approximately $10^{10}$, and the corresponding \textit{Mathematica} implementation is included.
\end{abstract}

\date{}
\keywords{\textbf{Keywords:} Diophantine equation, the generalized Erd\H{o}s--Straus conjecture.}

\section{Introduction}

The well-known Erdős--Straus conjecture in number theory, proposed by Paul Erdős and Ernst G. Straus in 1948 \cite{elsholtz2001sums,kotsireas1999erdos}, asserts that every positive integer greater than or equal to 2 can be represented as the sum of three unit fractions. This conjecture has attracted considerable interest in the mathematical community due to its apparent simplicity and the challenging nature of its proof. Despite its straightforward statement, the conjecture has remained unresolved for many years, capturing the curiosity of mathematicians globally.

Over time, various mathematicians have examined different facets of the Erdős--Straus conjecture, resulting in a substantial body of literature. Notable contributions include works by L. Bernstein \cite{bernstein1962losung}, Nathanson \cite{nathanson1994additive,nathanson2002proving}, Konyagin and Shorey \cite{konyagin1999number}, Ahlgren, Ono, and Penniston \cite{ahlgren2008zeta}, Vaserstein \cite{vaserstein2008geometric}, Helfgott and Harcos \cite{helfgott2013erdos}, Karasev \cite{karasev2015erdos}, Farnsworth \cite{farnsworth2018non}, Crawford \cite{crawford2019number}, Giovanni, Gallipoli, Gionfriddo \cite{giovanni2019historical}, Elsholtz and Tao \cite{elsholtz2013counting}, Ghanouchi \cite{ghanouchi2012analytic,ghanouchi2015erdos}, Mordell \cite{mordell1969diophantine}, Subburam and Togbé \cite{Subburam-2016}, Negash \cite{negash2018solutions}, Oblàth \cite{oblath1950equation}, Rosati \cite{rosati1954sull}, Sander \cite{sander1994half}, Vaughan \cite{vaughan1970problem}, Yamamoto \cite{yamamoto1965diophantine}, and many others. The conjecture's validity for integers up to $a \leq 10^{14}$ and $a \leq 10^{17}$ was verified by Swett \cite{swett1999erdos} and Salez \cite{salez2014erdos}, respectively.

A generalized version of the Erd\H{o}s--Straus conjecture states that, for
 any positive \(n\), all but finitely many fractions \({n}/{a}\)
 can be expressed as a sum of three positive unit fractions. The conjecture
 for fractions \({5}/{a}\) was made by Wac\l{}aw Sierpi\'{n}ski in 1956,
and the full conjecture was later attributed to Sierpi\'{n}ski's student, Andrzej Schinzel \cite{sierpinski1956,vaughan1970problem}.

In \textit{Unsolved Problems in Number Theory}, Guy~\cite{Guy} presents the conjecture as a long-standing open problem, highlighting its enduring importance in the field of number theory.

In this paper, we explore a generalization of the Erdős--Straus conjecture and provide an explicit solution demonstrating its validity for all positive integers 
\( a \geq 2 \).

We begin by introducing the Generalized Erd\H{o}s--Straus Conjecture:

\begin{conjecture}[Generalized Erd\H{o}s--Straus Conjecture]
For every integer \( n \geq 2 \) and every integer \( r \geq 2 \), the Diophantine equation
\begin{equation}
\label{classicESC}
\frac{r}{a} = \frac{1}{b} + \frac{1}{c} + \frac{1}{d}
\end{equation}
has a solution in positive integers \( b, c, d \).
\end{conjecture}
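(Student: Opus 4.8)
The plan is to fix the numerator $r\ge 2$ and treat the denominator (written $a$ throughout, in place of the $n$ of the quantifier) as the running variable, reducing the problem in stages to a sharply delineated arithmetic core. First I would record the elementary constraints. Because $\tfrac1b+\tfrac1c+\tfrac1d\le 3$, a solution can exist only when $r\le 3a$, so I read the statement with this necessary bound in force and clear the finitely many pairs $(r,a)$ with $a$ below any fixed threshold by direct computation. Next, multiplying $b,c,d$ by a common factor $k$ turns a solution for $a$ into one for $ka$, so a solution for any divisor of $a$ already yields one for $a$; it therefore suffices to solve the equation when $a$ is prime (whence $\gcd(r,a)=1$ automatically once $a>r$). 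Thus for each fixed $r$ I am reduced to producing, for all but a controlled set of primes $a$, an explicit triple $(b,c,d)$.

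The engine is a Mordell-type reduction to two unit fractions. For a trial first denominator $b$ I subtract and require
\[
\frac{r}{a}-\frac{1}{b}=\frac{rb-a}{ab}=\frac{1}{c}+\frac{1}{d},
\]
and I invoke the classical criterion that a fraction $p/q$ with $p=rb-a$ and $q=ab$ is a sum of two positive unit fractions precisely when $q^2$ has a divisor $g$ with $g\equiv -q\pmod p$, in which case $c=(g+q)/p$ and $d=cq/g$. Letting $b$ run through the short list $\lceil a/r\rceil,\lceil a/r\rceil+1,\dots$ and writing $a$ in a residue class modulo a modulus $M=M(r)$ assembled from $r$ and the small primes, each admissible class produces a parametrization $(b(a),c(a),d(a))$ valid identically in $a$ on that class. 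I would then build a covering system: a finite family of residue classes modulo $M(r)$, each carrying such a polynomial family, whose union is all of $(\mathbb{Z}/M(r)\mathbb{Z})^{\ast}$ save for an explicit exceptional set of classes.

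The hard part is exactly this exceptional set. Already for $r=4$ the residues that resist every polynomial family are the squares $1,11^2,13^2,17^2,19^2,23^2\pmod{840}$, and the theorem of Elsholtz and Tao shows that no finite covering system of such identities can ever dispose of them; the same obstruction persists for general $r$, where the stubborn classes are governed by whether $a$ is a quadratic residue to a modulus determined by $r$---precisely the condition that decides whether the divisor $g\equiv-q\pmod p$ above exists. My intended route for these classes is twofold: verify all $a$ up to a large explicit bound by machine, extending the computation already carried out for $r=5$, and for the remainder invoke a sieve/density argument exhibiting, for almost every such $a$, an auxiliary prime factor lying in a solvable class. The honest difficulty is that this last step does not close: the exceptional classes form a thin but genuinely infinite set of primes on which solvability is equivalent to an unproven statement about the existence of divisors of $a$ in prescribed residue classes. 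It is for this reason that the assertion is stated as a conjecture, the parametric families above reducing it to this single residual question rather than settling it.
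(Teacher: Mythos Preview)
Your diagnosis is correct, and so is your conclusion: the statement is labelled a conjecture precisely because no complete proof is known, and the paper does not supply one either. You correctly isolate the obstruction---the residual residue classes that resist every polynomial identity---and your invocation of the Elsholtz--Tao result (no finite covering system of such identities can handle all primes when $r=4$) is exactly the reason the argument cannot close. So there is a gap, but it is the gap you yourself name, and it is the same gap the paper leaves open.

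Where your write-up and the paper differ is in scope and concreteness. You treat general $r$, reduce to prime $a$ by multiplicativity, and drive the search for $(b,c,d)$ through the Mordell criterion for representing $(rb-a)/(ab)$ as a sum of two unit fractions, aiming at a covering system modulo some $M(r)$. The paper instead fixes $r=5$ from the outset and works from a single master identity (its Equation~(2.8), the case $(r,t)=(5,1)$ of~(2.7)), which it specializes to manufacture explicit polynomials $p_1,\dots,p_6\in\mathbb{Z}[x,y,z]$. By freezing two of the three variables at small values, each polynomial produces a one-parameter family of decompositions covering an entire residue class of $q$ modulo $12$, then $84$, then $252$; the paper thereby writes down a closed-form decomposition of $5/(5q+1)$ for every $q\not\equiv 0\pmod{252}$. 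The remaining class $q\equiv 0\pmod{252}$ is not proved but posed as a separate conjecture (that $p_1$ already represents every positive multiple of $252$), checked by computer for $5q+1$ up to about $2\times 10^{10}$.

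In short: your proposal is a correct high-level account of why the problem is hard and where it stands; the paper's contribution is the catalogue of explicit identities for $r=5$. Neither constitutes a proof of the conjecture as stated, and both say so.
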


The classical Erd\H{o}s--Straus Conjecture corresponds to the case \( r = 4 \). 
In the present work, we focus on the case \( r = 5 \) and begin by establishing 
some simple results for the general case with arbitrary \( r \geq 2 \).

\section{Main results}
\label{sec1}
Our main result is summarized in the following.

First, we consider a generatlized Erd\H{o}s--Straus equation as follows
\begin{equation}\label{GESE}
\frac{r}{q r+t}=\frac{1}{(n+q) (q r+t)} + \frac{1}{m (q+s)} + \frac{1}{q+s},
\end{equation}
where $r,q\geq 2$ and $m,q+s,q+n \geq 1$.

\begin{lemma}
\label{thm1}
The generalized Erd\H{o}s--Straus Equation~\eqref{GESE} holds for every \( q \in \mathbb{N}^* \) that satisfies one of the following conditions:
\begin{itemize}
  \item[\hspace{0.5em}(a)\hspace{0.5em}]  There exists an integer $\kappa \geq 1$, such that
  \begin{equation}
  \label{conj1}
  \frac{(m+1) (q r+t)}{r (n+q)-1}=\frac{m (q+s)}{n+q}=\kappa.
  \end{equation}
  
  \item[\hspace{0.5em}(b)\hspace{0.5em}] There exists an integer $\kappa \geq 1$ such that

\begin{equation}
  \label{conj2}
 r m \mid (1+m) (q r+t)+\kappa \quad \text{and} \quad r\kappa \mid (1+m) (q r+t)+\kappa.
\end{equation}  
  
  \item[\hspace{0.5em}(c)\hspace{0.5em}]  There exist $\kappa,z \geq 1$ such that
\begin{equation}
\label{conj3}
 q=\kappa z-s \quad \text{and} \quad \frac{\kappa  (r z+1)}{r s-t} = c \in \mathbb{N}^*.
\end{equation}
  
  \item[\hspace{0.5em}(d)\hspace{0.5em}]  There exist $\kappa,z \geq 1$ such that
\begin{equation}
\label{conj4}
 q=\beta-s \quad \text{and} \quad \frac{\beta  (\kappa +\beta  r)}{\kappa  (r s-t)} = c \in \mathbb{N}^*.
\end{equation}
\end{itemize}
\end{lemma}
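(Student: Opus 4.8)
The plan is to reduce the asserted identity~\eqref{GESE} to a single bilinear polynomial identity in the parameters and then to check that each of the four hypotheses forces it. Combining $\tfrac{1}{m(q+s)}+\tfrac{1}{q+s}=\tfrac{m+1}{m(q+s)}$ on the right of~\eqref{GESE} and moving $\tfrac{1}{(n+q)(qr+t)}$ to the left gives the equivalent statement $\tfrac{r(n+q)-1}{(n+q)(qr+t)}=\tfrac{m+1}{m(q+s)}$, which, after clearing denominators, is equivalent to
\begin{equation}
m(q+s)\bigl(r(n+q)-1\bigr)=(m+1)(n+q)(qr+t).\tag{$\star$}
\end{equation}
Under the standing hypotheses $m,\,q+s,\,q+n\ge 1$ the three denominators $(n+q)(qr+t)$, $m(q+s)$ and $q+s$ are positive integers, so once $(\star)$ is established~\eqref{GESE} is a genuine decomposition into unit fractions; hence it suffices to prove $(\star)$ in each case.

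Case (a) is immediate: with $N:=n+q$, the two equalities in~\eqref{conj1} read $(m+1)(qr+t)=\kappa(rN-1)$ and $m(q+s)=\kappa N$, so substituting the second into the left-hand side of $(\star)$ and the first into the right-hand side makes both sides equal $\kappa N(rN-1)$. For (b), (c), (d) the strategy is to read $(\star)$ as an equation for the still-free parameters, to observe that it admits the explicit solution already sitting behind~\eqref{conj1}, and to note that the divisibility/integrality conditions in~\eqref{conj2}--\eqref{conj4} are exactly what is needed for that solution to have entries in $\mathbb{N}^*$. Concretely: for (b), solving the pair of equations of Case (a) for $n+q$ and $q+s$ yields $n+q=\tfrac{(m+1)(qr+t)+\kappa}{r\kappa}$ and $q+s=\tfrac{(m+1)(qr+t)+\kappa}{rm}$, and~\eqref{conj2} is precisely the assertion that both are integers (hence positive), after which~\eqref{conj1} holds by construction and Case (a) applies. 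For (c), the relation $q=\kappa z-s$ fixes $q+s=\kappa z$; setting $c=m+1$ (legitimate since $m\ge 1$ forces $c\ge 2$) and $n+q=(c-1)z$, and using $\kappa(rz+1)=(rs-t)c$ to rewrite $qr+t=\tfrac{\kappa(mrz-1)}{m+1}$, both sides of $(\star)$ collapse to $m\kappa z(mrz-1)$, while $q+n=(c-1)z\ge 1$. Case (d) is identical in spirit: $q=\beta-s$ fixes $q+s=\beta$, and the assignments forced by~\eqref{conj4} (again exactly those making the remaining parameters positive integers) turn $(\star)$ into an identity and recover~\eqref{conj1}, reducing it to Case (a).

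There is no deep difficulty here, since $(\star)$ is bilinear; the work is entirely bookkeeping. The two places needing care are (i) carrying out the reduction to $(\star)$ correctly, and (ii) recognising, for each of (b)--(d), which parameter $(\star)$ should be solved for — $n+q$ and $q+s$ in (b), the link $c=m+1$ together with $n+q$ in (c) and (d) — and verifying that the side conditions packaged into~\eqref{conj1}--\eqref{conj4} are exactly the integrality and positivity requirements for the resulting values to lie in $\mathbb{N}^*$. The substantive content of the paper lies downstream of this lemma: exhibiting, for each residue class of $a$ modulo $5$ (and, in the hard case $a\equiv 1$, modulo $252$), explicit values of the free parameters that realise one of conditions (a)--(d).
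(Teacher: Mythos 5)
Your reduction of \eqref{GESE} to the single identity $m(q+s)\bigl(r(n+q)-1\bigr)=(m+1)(n+q)(qr+t)$ is correct and is, in substance, the same bookkeeping the paper performs: your case (a) is the paper's case (a) made precise, your case (b) reproduces exactly the choices \eqref{proofconj2}, and your case (c) (with $m=c-1$, $q+s=\kappa z$, $n+q=(c-1)z$) is the paper's solution of its algebraic system leading to \eqref{erdosh1}. The only loose end in (c) is that you assert rather than verify $c\ge 2$ (needed for $m\ge1$); this is easily repaired, since $c=1$ together with $q+s=\kappa z$ would force $\kappa=-(qr+t)<0$.

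The genuine gap is case (d), which you dispatch with ``identical in spirit'' plus the claim that \eqref{conj4} is ``exactly'' the integrality requirement; that claim is false, and since you never exhibit the assignments, nothing in your write-up would catch it. With $q+s=\beta$ the forced parameters are $m=\frac{\kappa+qr+t}{rs-t}$ and $n+q=\frac{\beta m}{\kappa}=c-\frac{\beta}{\kappa}$; the hypothesis $c\in\mathbb{N}^*$ does make $m(q+s)=\kappa c-\beta$ an integer, but the remaining denominator $(n+q)(qr+t)=\bigl(c-\frac{\beta}{\kappa}\bigr)(qr+t)$ is an integer only if $\kappa\mid\beta(qr+t)$, which \eqref{conj4} does not provide. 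Concretely, for $(r,t)=(5,1)$ take $\kappa=9$, $\beta=3$, $s=1$: then $q=2$ and $c=\frac{3(9+15)}{9\cdot 4}=2\in\mathbb{N}^*$, yet $n+q=\frac{5}{3}$ and the middle term of \eqref{erdosh2} becomes $\frac{3}{55}$, so the resulting identity $\frac{5}{11}=\frac{1}{3}+\frac{3}{55}+\frac{1}{15}$ is true but is not a unit-fraction decomposition. Hence (d) needs the extra divisibility $\kappa\mid\beta(qr+t)$ (case (c) survives precisely because there $\beta=\kappa z$, so $\kappa\mid\beta$ automatically). To be fair, the paper's own proof of (d) asserts the same false equivalence, so on this point you have matched the paper's level of rigor; but as a blind proof of the lemma as stated, case (d) is not established, and a careful execution of your own plan would in fact have exposed the missing hypothesis.
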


\begin{proof}[{\bf Proof of Lemma~\ref{thm1}}] The theorem holds clearly in each of the four  cases; we now justify each in turn.
\begin{itemize}
  \item[\hspace{0.5em}(a)\hspace{0.5em}]  The matter is clear; it suffices to take 
  \begin{equation}
  \label{proofconj1}
  \kappa = m (r s-t)-(q r+t).
  \end{equation}
  
  \item[\hspace{0.5em}(b)\hspace{0.5em}] For this case, it suffices to consider  
\begin{equation}
\label{proofconj2}
 n = \frac{\kappa +(m+1) (q r+t)}{\kappa  r}-q \quad \text{and} \quad 
 s = \frac{\kappa +(m+1) (q r+t)}{m  r}-q.
\end{equation}   
Clearly, the values of \( s \) and \( n \) defined by \eqref{proofconj2} satisfy equation \eqref{GESE}.  
Therefore, in order for \( s \) and \( n \) to be integers, condition \eqref{conj2} must hold.
  
  \item[\hspace{0.5em}(c)\hspace{0.5em}]  Referring to equations~\eqref{conj1} and~\eqref{proofconj2}, we now solve the algebraic system
\begin{equation*}
	(m+1) (q r+t)  =\kappa  (\alpha  r-1),\quad	
	\alpha  \kappa  =\beta  m, \quad
	\beta  = q+s, \quad \text{and} \quad 
	q+s  =\kappa  z,
\end{equation*}
to obtain 
\begin{equation*}
	\beta = \kappa  z,  \quad
 	 m = \frac{\kappa +\kappa  r z}{r s-t}-1, \quad
	\alpha  = z \left(\frac{\kappa +\kappa  r z}{r s-t}-1\right) , \quad 
	\text{and} 
	\quad 
	q = \kappa  z-s.
\end{equation*}
Substituting these expressions into equation~\eqref{GESE}, we obtain
\begin{equation}
	\label{erdosh1}
\frac{r}{r (\kappa  z-s)+t}=\frac{1}{z \left(\frac{\kappa  (r z+1)}{r s-t}-1\right) (r (\kappa  z-s)+t)}+\frac{1}{\kappa  z \left(\frac{\kappa  (r z+1)}{r s-t}-1\right)}+\frac{1}{\kappa  z}.
\end{equation}
Since this equation is always true, equation~\eqref{GESE} also holds, provided that the numerators on the right-hand side are integers, which occurs precisely when condition~\eqref{conj3} is satisfied.

  \item[\hspace{0.5em}(d)\hspace{0.5em}] Also refering to equations~\eqref{conj1} and~\eqref{proofconj2}, 
  we solve the algebraic system
  \begin{equation*}
  	(m+1) (q r+t)=\kappa  (\alpha  r-1),\quad	
  	\alpha  \kappa  =\beta  m, \quad \text{and} \quad 
  	\beta  = q+s,
  \end{equation*}
  we obtain 
  \begin{equation*}
  	\alpha = \frac{\beta  (\kappa +r (\beta -s)+t)}{\kappa  (r s-t)},  \quad
  	m = \frac{\kappa +r (\beta -s)+t}{r s-t} , \quad 
  	\text{and} 
  	\quad 
  	q = \beta-s.
  \end{equation*}
  Substituting these expressions into equation~\eqref{GESE}, we obtain
  \begin{equation}
  	\label{erdosh2}
  	\frac{r}{r (\beta -s)+t}=\frac{1}{\beta }+\frac{1}{\frac{\beta}{\kappa } \left(\frac{\kappa +\beta  r}{r s-t}-1\right) (r (\beta -s)+t)}+\frac{1}{\beta  \left(\frac{\kappa +\beta  r}{r s-t}-1\right)}.
  \end{equation}
  As the equation is identically satisfied, Equation~\eqref{GESE} holds as well, 
  provided the numerators on the right-hand side are integers. This condition is met 
  exactly when condition~\eqref{conj4} is satisfied.
\end{itemize}
This completes the proof.
\end{proof}

  \begin{remark}
	It is well known that Conjecture~\ref{classicESC} remains unsolved for \( r = 4 \) and \( r = 5 \), particularly when \( a \) is a prime of the form \( 4q + 1 \) or \( 5q + 1 \), respectively. For this reason, our focus will be on proving Equation~\eqref{GESE} in the cases \( (r, t) = (4, 1) \) and \( (r, t) = (5, 1) \).
\end{remark}

Since our aim is to prove the generalized Erd\H{o}s--Straus Conjecture~\ref{classicESC} 
with \( r = 5 \), we focus instead on Equation~\eqref{GESE} with \( (r, t) = (5, 1) \). 
Before proceeding, we address the cases where \( a \neq 5q + 1 \); this is the subject 
of the following lemma.

\begin{lemma}
\label{leme}
The generalized Erdős--Straus decomposition~\eqref{classicESC}, with \(r=4,5,\) or 6
have solutions for every \(a \neq r\,k+1,~~k\in \mathbb{N}^*\), with \(\min\{b,c,d\} = \left[\frac{a}{r}\right]+1\), where \(\left[\frac{a}{r}\right]\) is the integer part of \(\frac{a}{r}\).
\end{lemma}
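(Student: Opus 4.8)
The plan is to exhibit, for every admissible $a$, an explicit decomposition in which one of the three unit fractions is always $\frac{1}{q+1}$, where $q=\left[\frac{a}{r}\right]$. Write $a=rq+j$ with $0\le j\le r-1$; the hypothesis $a\neq rk+1$ (with $k\in\mathbb{N}^{*}$) rules out $j=1$, so $j\in\{0,2,3,\dots,r-1\}$. Since $j\le r-1$ we have $r(q+1)>a$, hence
\[
\frac{r}{a}-\frac{1}{q+1}=\frac{r(q+1)-a}{a(q+1)}=\frac{r-j}{a(q+1)}>0,
\]
and it remains to write this positive fraction as a sum of two unit fractions whose denominators are both $\ge q+1$: then the three denominators have minimum exactly $q+1=\left[\frac{a}{r}\right]+1$, which is what the lemma claims. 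The same formulas work verbatim when $q=0$, so the small values $2\le a\le r-1$ need no separate treatment.

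I would then dispose of the cases one residue $j$ at a time, for each $r\in\{4,5,6\}$. If $j=0$ then $a=rq$ and $\frac{r}{a}=\frac{1}{q}=\frac{1}{q+1}+\frac{1}{2q(q+1)}+\frac{1}{2q(q+1)}$. If $j\ge 2$, a one-line computation shows that in every case \emph{except} $(r,j)=(5,2)$ the fraction $\frac{r-j}{a(q+1)}$ reduces to $\frac{1}{N}$ or $\frac{2}{N}$ with $q+1\mid N$ (hence $N\ge q+1$), after which one splits $\frac{1}{N}=\frac{1}{2N}+\frac{1}{2N}$ or $\frac{2}{N}=\frac{1}{N}+\frac{1}{N}$, all denominators then being $\ge q+1$. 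Concretely this uses the cancellations $4q+2=2(2q+1)$, $6q+2=2(3q+1)$, $6q+3=3(2q+1)$, $6q+4=2(3q+2)$, together with the trivial reductions for $a=4q+3$, $a=5q+3$, $a=5q+4$ and $a=6q+5$.

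The one genuinely delicate case is $r=5$, $j=2$, i.e. $a=5q+2$, where $\frac{5}{a}-\frac{1}{q+1}=\frac{3}{M}$ with $M=(5q+2)(q+1)$ and $3\nmid(5q+2)$ in general. The observations I would use are: (i) $M$ is always even, since exactly one of $5q+2$, $q+1$ is even for each parity of $q$; and (ii) running through $q\bmod 3$ shows $M\not\equiv 1\pmod 3$, so either $3\mid M$ or $M\equiv 2\pmod 3$. If $3\mid M$, take $\frac{3}{M}=\frac{1}{M/3}=\frac{1}{2M/3}+\frac{1}{2M/3}$; if $M=3k+2$, use the identity $\frac{3}{3k+2}=\frac{1}{k+1}+\frac{1}{(k+1)(3k+2)}$, checked by clearing denominators. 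In both subcases the new denominators are $>q+1$ because $(5q+2)(q+1)\ge 3(q+1)$. The hard part is exactly this last case — specifically, verifying that the two subcases are exhaustive (the congruence facts about $M$) and that the identities keep every denominator $\ge q+1$; everything else is routine bookkeeping over the finitely many residues $j$.
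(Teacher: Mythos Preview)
Your argument is correct and follows essentially the same route as the paper: for each residue $j\ne 1$ you take one term to be $\tfrac{1}{q+1}$ with $q=\left[\tfrac{a}{r}\right]$ and split the remainder $\tfrac{r-j}{a(q+1)}$ into two unit fractions, case by case. The only substantive difference is in the case $(r,j)=(5,2)$, where the paper splits according to the parity of $q$ (giving two explicit formulas) while you split according to $q\bmod 3$ via the congruence analysis of $M=(5q+2)(q+1)$; the other residues receive slightly different but equally routine explicit identities in the two write-ups.
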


\begin{theorem}
	\label{thm2}
	Let \( a \geq 2 \) be an integer. Then there exist positive integers \( b, c, d \) 
	such that the generalized Erd\H{o}s--Straus decomposition 
	\begin{equation}\label{DE1}
		\frac{5}{a}=\frac{1}{b}+\frac{1}{c}+\frac{1}{d},
	\end{equation}
	holds in each of the 
	following cases for \( q \in \mathbb{Z}_{>0} \):
	
	\begin{enumerate}[label=(\arabic*), labelsep=0.6em, left=0pt, itemsep=0.75em]
		\item\label{state1} If  \( a = 5q+i,\) \(i \in \{0,2,3,4\},\)  then such a decomposition exists.
		\item\label{state2} If \( a = 5q+1\) with \(q \not\equiv 0 \pmod{12},\) then such a decomposition exists.
		\item\label{state3} If \(q = 12 u,~u\geq1 \), for \( u = 7v+i,~v\geq 0\) and \(i \in \{1,2,3,4,5,6\},\) then such a decomposition exists.
		\item\label{state4} If \( u = 7v \), then \( q = 84 v \).  
In this case, if \( v = 3w + i \) with \( i \in \{1,2\} \), then such a decomposition exists.
	\end{enumerate}
\end{theorem}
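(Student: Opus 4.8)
The plan is to settle part~\ref{state1} by a one-line reduction to Lemma~\ref{leme}, and to handle parts~\ref{state2}--\ref{state4} by specializing Lemma~\ref{thm1} to $(r,t)=(5,1)$ and exhausting the residue classes of $q$ in the nested pattern ``modulo $12$, then modulo $7$, then modulo $3$''. For part~\ref{state1}: if $a=5q+i$ with $i\in\{0,2,3,4\}$ and $q\ge1$ then $a\not\equiv1\pmod5$, so $a$ is not of the form $5k+1$ with $k\in\mathbb{N}^{*}$, and Lemma~\ref{leme} with $r=5$ gives a decomposition at once; the residual values $a\in\{2,3,4\}$ (i.e.\ $q=0$) are covered the same way, or by telescopings such as $\tfrac{5}{5q+4}=\tfrac{1}{q+1}+\tfrac{1}{(5q+4)(q+1)}$ after one further split. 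This leaves $a=5q+1$, $q\ge1$, and from here we work inside Lemma~\ref{thm1} with $(r,t)=(5,1)$.

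For part~\ref{state2} I would run through the residues $j\in\{1,\dots,11\}$ of $q$ modulo $12$; each admits a short elementary telescoping (each a specialization of~\eqref{conj3} or~\eqref{conj4}). If $q$ is odd then $a$ is even and $\tfrac{5}{a}=\tfrac{1}{a}+\tfrac{1}{a/2}+\tfrac{1}{a/2}$. If $q=4\ell+2$ then $q+1\equiv3\pmod4$ and, from $\tfrac{4}{q+1}=\tfrac{1}{\ell+1}+\tfrac{1}{(\ell+1)(q+1)}$, one gets $\tfrac{5}{5q+1}=\tfrac{1}{q+1}+\tfrac{1}{(5q+1)(\ell+1)}+\tfrac{1}{(5q+1)(\ell+1)(q+1)}$. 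If $q\equiv4\pmod{12}$ then $3\mid a$ with $a$ odd, so $\tfrac{5}{a}=\tfrac{1}{a/3}+\tfrac{1}{(a+1)/2}+\tfrac{1}{a(a+1)/2}$. If $q=12w+8$ then $q+1=3(4w+3)$, $4w+3\equiv3\pmod4$, $a+4=15(4w+3)$, and $\tfrac{5}{a}=\tfrac{1}{3(4w+3)}+\tfrac{1}{3a(w+1)}+\tfrac{1}{3a(w+1)(4w+3)}$. These four families exhaust $q\not\equiv0\pmod{12}$, the free parameter sweeping out an entire class of $q$; verifying the identities is routine.

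Parts~\ref{state3} and~\ref{state4} apply the same mechanism to the residual class $q\equiv0\pmod{12}$, on which every elementary trick above fails because $a=60u+1$ is simultaneously $\equiv1$ modulo $2^{2}$, $3$ and $5$. Writing $q=12u$ and sorting by $u\bmod7$ handles part~\ref{state3}: $u\equiv5\pmod7$ has the shortcut $7\mid a$, $\tfrac{5}{a}=\tfrac{1}{2m}+\tfrac{1}{5m}+\tfrac{1}{70m}$ with $m=a/7$, from $\tfrac57=\tfrac12+\tfrac15+\tfrac1{70}$, and for $i\in\{1,2,3,4,6\}$ I would exhibit, in one of~\eqref{conj2}--\eqref{conj4}, an admissible choice of the free parameters polynomial in auxiliary variables (the congruence $u\equiv i\pmod7$ pinning one variable to a progression), typically via a substitution that turns a divisibility constraint into an identity, so that $q$ becomes a two- or three-variable polynomial whose image is exactly the target class and the decomposition is read off from~\eqref{erdosh1} or~\eqref{erdosh2}. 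Part~\ref{state4} repeats this one level deeper: with $u=7v$, i.e.\ $q=84v$, one sorts by $v\bmod3$ and disposes of $v\equiv1,2\pmod3$ by the analogous parametrized instances of Lemma~\ref{thm1}, checking \eqref{conj1}--\eqref{conj4} by substitution.

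The main obstacle is the parameter search: for classes in which $q$ carries only low powers of $2$ and $3$ the telescopings suffice, but once $q$ is forced to be divisible by $4$, by $9$ or by $7$ the admissible parametrizations need progressively more free variables, which is precisely why the argument must run as the chain $12\mid q\Rightarrow84\mid q\Rightarrow252\mid q$ rather than in one pass. A direct search indicates that the constructions of Lemma~\ref{thm1} no longer reach the class $q\equiv0\pmod{252}$, and that is why this final case is relegated to the conjecture built on $p_{1}(x,y,z)=z\bigl(x(5y-1)-y\bigr)-x$ rather than proved here.
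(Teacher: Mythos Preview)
Your treatment of parts~\ref{state1} and~\ref{state2} is correct. Part~\ref{state1} matches the paper exactly. For part~\ref{state2} you take a genuinely different and more elementary route: rather than specializing Lemma~\ref{thm1} to produce the polynomials $p_{1},p_{2},p_{3}$ that the paper uses to cover the eleven nonzero residues of $q$ modulo~$12$ one by one, you dispose of all odd $q$ at once via the parity of $a$, and then handle $q\equiv 2,6,10\pmod{12}$, $q\equiv4\pmod{12}$, and $q\equiv8\pmod{12}$ by short divisibility-based telescopings. Your four identities check out and cover exactly the required residues. This is cleaner than the paper's argument for part~\ref{state2}; on the other hand, the paper's machinery is what eventually isolates the polynomial $p_{1}$ on which Conjecture~\ref{cnjctr} rests, so it is not wasted effort.

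Parts~\ref{state3} and~\ref{state4}, however, are underspecified. You give one concrete sub-case, the nice $7\mid a$ shortcut for $u\equiv5\pmod7$ (which is in fact simpler than the paper's formula for that class), but for the remaining five residues of $u$ modulo~$7$ and the two residues of $v$ modulo~$3$ you write only that you ``would exhibit \dots\ an admissible choice of the free parameters''. That promise is precisely the content to be supplied: the paper's proofs of \ref{state3} and \ref{state4} consist entirely of explicit parameter choices in~\eqref{erod5} (for instance $s=2$, $z=7$, $\kappa=2+12x$ for $u\equiv1\pmod7$) or the auxiliary polynomials $p_{4},p_{5},p_{6}$, together with the resulting closed-form decompositions. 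Without exhibiting these, or giving an a~priori reason why Lemma~\ref{thm1} must furnish them for each of these residue classes, your plan for \ref{state3}--\ref{state4} is an outline of hope rather than an argument. The difficulty is real: as your own final paragraph concedes, the same mechanism stops working at $q\equiv0\pmod{252}$, so no general principle guarantees success one level earlier; each class has to be settled individually.
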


\medskip
\noindent
We note that Theorem~\eqref{thm2} does not address the case \(q \equiv 0 \pmod{252}\).
The following conjecture is intended to cover this remaining case.
\medskip

\begin{conjecture}\label{cnjctr}
Let \(p_{1}(x,y,z) \in \mathbb{Z}[x,y,z]\) be the polynomial~\eqref{p1} defined in Section~\ref{sec3} (Proof of Statement~\ref{state2} of Theorem~\ref{thm2}).
Then, for every positive integer \(\ell\), there exist \(x, y, z \in \mathbb{N}^*\) such that
\[
p_{1}(x,y,z) = 252\ell.
\]
Equivalently, \(252\mathbb{N}^* \subseteq p_{1}(\mathbb{N}^{*3})\).
\end{conjecture}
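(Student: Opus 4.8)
The plan is to convert the surjectivity statement into a purely multiplicative (divisor-existence) condition and then attack it in two stages: a main case covering almost all $\ell$, and a fallback for a thin exceptional set. Recall that, after expansion, $p_{1}(x,y,z)=5xyz-xz-yz-x$, so the equation $p_{1}(x,y,z)=252\ell$ is equivalent to
\[
z\bigl((5x-1)y-x\bigr)=252\ell+x .
\]
For a fixed value of $x$, the left-hand side ranges exactly over those positive integers admitting a divisor $w$ with $w\equiv -x \pmod{5x-1}$ and $w\ge 4x-1$ (take $w=(5x-1)y-x$, $y=(w+x)/(5x-1)$, and $z=(252\ell+x)/w$). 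Thus the conjecture is equivalent to the assertion that, for every $\ell\ge 1$, there exists $x\ge 1$ such that $252\ell+x$ possesses a divisor in the residue class $-x \pmod{5x-1}$ of size at least $4x-1$.

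First I would dispose of the generic case by taking $x=1$. Here the criterion collapses to the clean statement that $252\ell+1$ has a divisor $\equiv 3 \pmod{4}$ (automatically $\ge 3$), in which case $y=(w+1)/4$ and $z=(252\ell+1)/w$. Since $252\ell+1\equiv 1 \pmod{4}$, such a divisor exists precisely when $252\ell+1$ has at least one prime factor $\equiv 3 \pmod{4}$. This already settles every $\ell$ outside the exceptional set
\[
E=\{\ell\ge 1 : \text{every prime factor of } 252\ell+1 \text{ is } \equiv 1 \pmod{4}\},
\]
which by a Landau-type theorem has density zero (its counting function up to $L$ is $O\!\left(L/\sqrt{\log L}\right)$). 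Hence $x=1$ alone proves the conjecture for a full-density set of $\ell$, and the remaining work concerns only the thin set $E$.

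For $\ell\in E$ I would bring in the next values $x=2,3,\dots$ as a fallback. For each such $x$ the requirement becomes that $252\ell+x$ have a divisor $\equiv -x \pmod{5x-1}$ of size $\ge 4x-1$; since the moduli $5x-1=4,9,14,19,\dots$ are distinct and the targets $252\ell+x$ are distinct integers, the failure conditions for different $x$ are governed by essentially unrelated multiplicative constraints. The aim is a covering statement: the simultaneous failure of $x=1,2,\dots,K$ should become impossible once $K$ is allowed to grow slowly with $\ell$. Concretely, one would try to show that for $\ell\in E$ at least one of $252\ell+2,\,252\ell+3,\dots$ must acquire a divisor in the prescribed progression, making this quantitative through sieve estimates for the joint distribution of prime factors of the nearby integers $252\ell+x$.

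The hard part will be precisely this last step: ruling out an $\ell$ that escapes every admissible $x$ at once. Each individual condition holds for a density-$1$ set of $\ell$, but proving that the countable intersection of the complementary (thin) failure sets is empty amounts to controlling the factorization type of the consecutive integers $252\ell+1,252\ell+2,\dots$ simultaneously, forcing a prime factor in a fixed residue class for at least one of them uniformly in $\ell$ (not merely for almost all $\ell$). Current unconditional tools do not yet guarantee such behaviour for every single $\ell$, which is exactly why the statement is offered here as a conjecture and verified only computationally up to $\approx 10^{10}$. A realistic route to an unconditional theorem would therefore either (i) uncover an explicit algebraic parametrization handling $E$, analogous to the polynomial decompositions used for the cases $q\not\equiv 0 \pmod{252}$, thereby bypassing the analytic obstruction entirely, or (ii) invoke strong results on primes in arithmetic progressions (of Bombieri--Vinogradov type, or bounds on the least prime factor lying in a fixed progression) to ensure, uniformly in $\ell$, that some $252\ell+x$ with $x$ bounded has the desired divisor.
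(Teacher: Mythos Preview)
There is nothing to compare against: in the paper this statement is a \emph{conjecture}, not a theorem. The paper offers no proof; it only reports a computational check (via the \textit{Mathematica} code in the appendix) that $p_{1}$ hits every multiple of $252$ up to roughly $2\times 10^{10}$.

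Your reformulation is correct and useful. Writing $p_{1}(x,y,z)=252\ell$ as $z\bigl((5x-1)y-x\bigr)=252\ell+x$ and, for fixed $x$, reading this as the existence of a divisor $w\ge 4x-1$ of $252\ell+x$ with $w\equiv -x\pmod{5x-1}$, is exactly the right way to see the problem. The $x=1$ specialisation to ``$252\ell+1$ has a prime factor $\equiv 3\pmod 4$'' is also correct, and your density-zero description of the exceptional set $E$ via Landau's theorem is accurate. This is genuinely more structural than anything the paper does for this residue class.

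That said, what you have written is a strategy, not a proof, and you say so yourself. The gap is exactly the one you name: for $\ell\in E$ you need, \emph{for every} $\ell$, at least one $x$ for which $252\ell+x$ has a divisor in the prescribed residue class mod $5x-1$. The individual failure sets are thin, but ``thin'' is far from ``empty'', and nothing you invoke (sieve bounds, Bombieri--Vinogradov, least-prime-in-progression estimates) currently delivers a uniform-in-$\ell$ covering. Option~(i) in your last paragraph---finding an explicit algebraic parametrisation for the residual class, as the paper does for $q\not\equiv 0\pmod{252}$---is precisely what the paper could not do either, which is why the statement is left as a conjecture. So your proposal correctly isolates the obstruction but does not remove it; in that sense it neither proves the conjecture nor diverges from the paper, since the paper has no proof to diverge from.
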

A computational verification of Conjecture~\ref{cnjctr},  
up to \(5q+1 \approx 2\times 10^{10}\) for \(q \equiv 0 \pmod{252}\),  
is provided in Appendix~\ref{additional}.

Theorem \ref{thm2} is proved in Section \ref{sec3}.

\begin{proof}[{\bf Proof of Lemma \ref{leme}}]
We start by solving equation \eqref{classicESC} with respect to \(d\) to obtain
\begin{equation}\label{deq1}
	d=\frac{a\, b\, c}{r\, b\, c-a\, (b+c)}.
\end{equation}
Since \(\frac{1}{d}=\frac{r}{a}-\frac{1}{b}-\frac{1}{c}\) then \(\frac{1}{d}<\frac{r}{a}\) which means that
\[
d > \frac{a}{r} \Rightarrow d \geq \left[ \frac{a}{r} \right] + 1.
\]
We consider the minimum value for \(d\), then we obtain the following
decompositions for \(\frac{r}{a}\) regarding \(r=4,5,6\).

\noindent {\bf (1) \underline{For \(r=4\):}} Consider \(a= 4\,k+i,~~ k\in \mathbb{N}^*\) and \(i=0,2,3\). Then we have the following:

\noindent {\bf (1.1)}  If \(a= 4\,k,~~ k\in \mathbb{N}^*\), we get the decomposition
	\begin{equation*}
		\frac{4}{4\, k}=\frac{1}{k} = \frac{1}{(k+1)^2}+\frac{1}{k\, (k+1)^2}+\frac{1}{k+1}.
	\end{equation*}
	
\noindent {\bf (1.2)}  If \(a= 4\,k+2,~~ k\in \mathbb{N}^*\), we get the decomposition
	\begin{equation*}
		\frac{4}{4\, k+2} = \frac{1}{2\, (k+1)^2}+\frac{1}{2\, (k+1)^2 (2\, k+1)}+\frac{1}{k+1}.
	\end{equation*}
	
\noindent {\bf (1.3)}  If \(a= 4\,k+3,~~ k\in \mathbb{N}^*\), we get the decomposition
	\begin{equation*}
		\frac{4}{4\, k+3} = \frac{1}{4\, (k+1)^2}+\frac{1}{4\, (k+1)^2 (4\, k+3)}+\frac{1}{k+1}.
	\end{equation*}
	
\noindent {\bf (2) \underline{For \(r=5\):}} Consider \(a= 5\,k+i,~~ k\in \mathbb{N}^*\) and \(i=0,2,3,4\). Then we have the following:

\noindent {\bf (2.1)} If \(a= 5\,k,~~ k\in \mathbb{N}^*\), we get the decomposition 
\begin{equation*}
    \frac{5}{5\, k}=\frac{1}{k} = \frac{1}{(k+1)^2}+\frac{1}{k\, (k+1)^2}+\frac{1}{k+1}.
  \end{equation*}

\noindent {\bf (2.2)} If \(a= 5\,k+2,~~ k\in \mathbb{N}^*\), we get the decomposition
  \[
  \frac{5}{5\, k+2} =
  \begin{cases}
  	\dfrac{1}{10\, q^2+7\, q+1}+\dfrac{1}{20\, q^2+14\, q+2}+\dfrac{1}{2\, q+1}, & \text{if } k=2\,q,   	\vspace{8pt}\\
  	\dfrac{1}{10\, q^2+17\, q+7}+\dfrac{1}{20\, q^2+34\, q+14}+\dfrac{1}{2\, q+2},  & \text{if } k=2\,q+1.
  \end{cases}
  \]

\noindent {\bf (2.3)} If \(a= 5\,k+3,~~ k\in \mathbb{N}^*\), we get the decomposition
    \begin{equation*}
    \frac{5}{5\, k+3} = \frac{1}{5\, k^2+8\, k+3}+\frac{1}{5\, k^2+8\, k+3} +\frac{1}{k+1}.
  \end{equation*}

\noindent {\bf (2.4)} If \(a= 5\,k+4,~~ k\in \mathbb{N}^*\), we get the decomposition
    \begin{equation*}
    \frac{5}{5\, k+4} = \frac{1}{5\, (k+1)^2 (5\, k+4)}+\frac{1}{5\, (k+1)^2} +\frac{1}{k+1}.
  \end{equation*}
  
\noindent {\bf (3) \underline{For \(r=6\):}} Consider \(a= 6\,k+i,~~ k\in \mathbb{N}^*\) and \(i=0,2,3,4,5\). Then we have the following:

\noindent {\bf (3.1)} If \(a= 6\,k,~~ k\in \mathbb{N}^*\), we get the decomposition \(\tfrac{6}{6\, k} = \tfrac{1}{k}\).
	
\noindent {\bf (3.2)} If \(a= 6\,k+2,~~ k\in \mathbb{N}^*\), we get the decomposition
	\begin{eqnarray*}
		\frac{6}{6\, k+2} &=& \frac{1}{3\, k^2+4\, k+1}+\frac{1}{3\, k^2+4\, k+1}+\frac{1}{k+1}.
	\end{eqnarray*}
	
\noindent {\bf (3.3)} If \(a= 6\,k+3,~~ k\in \mathbb{N}^*\), we get the decomposition
	\begin{equation*}
		\frac{6}{6\, k+3} = \frac{1}{2\, (k+1)^2}+\frac{1}{2\, (k+1)^2 (2\, k+1)} +\frac{1}{k+1}.
	\end{equation*}
	
\noindent {\bf (3.4)} If \(a= 6\,k+4,~~ k\in \mathbb{N}^*\), we get the decomposition
	\begin{equation*}
		\frac{6}{6\, k+4} = \frac{1}{3\, (k+1)^2}+\frac{1}{3\, (k+1)^2 (3\, k+2)} +\frac{1}{k+1}.
	\end{equation*}
	
\noindent {\bf (3.5)} If \(a= 6\,k+5,~~ k\in \mathbb{N}^*\), we get the decomposition
	\begin{equation*}
		\frac{6}{6\, k+5} = \frac{1}{6\, (k+1)^2}+\frac{1}{6\, (k+1)^2 (6\, k+5)} +\frac{1}{k+1}.
	\end{equation*}
This completes the proof.
\end{proof}

Now, let us return to the proof of Theorem \ref{thm2}.

\section{Proof of Theorem Theorem \ref{thm2}}
\label{sec3}

\begin{proof}[{\bf Proof of Statement~\ref{state1} of Theorem \ref{thm2}}]
The proof of Theorem~\ref{thm2} follows directly in the case \( a \neq 5q + 1 \), with \( q \in \mathbb{N} \), by invoking Lemma~\ref{leme}. 
\end{proof}

\begin{proof}[{\bf Proof of Statement~\ref{state2} of Theorem \ref{thm2}}]
Now, we consider the case where \( a = 5q + 1 \), with \( q \in \mathbb{N} \). 

First, observe that any natural number \( q \) can be written as
\begin{equation}
\label{zks}
q = z \kappa - s,
\end{equation}
for any $z,\kappa,s \in \mathbb{N}^*$, that is due to arbitrariness of these integers.
In addition, the generalized decomposition defined by 
Equation~\eqref{erdosh1}, when setting \( (r, t) = (5, 1) \), 
becomes:
\begin{equation}
\label{erod5}
\frac{5}{5 (\kappa  z-s)+1}=\frac{1}{\kappa  z \left(\frac{\kappa  (5 z+1)}{5 s-1}-1\right)}+\frac{1}{z \left(\frac{\kappa  (5 z+1)}{5 s-1}-1\right) (5 (\kappa  z-s)+1)}+\frac{1}{\kappa  z}.
\end{equation}
This identity holds for every \( q \) of the form~\eqref{zks}. Then, for \( q \) to satisfy the decomposition~\eqref{DE1}, it must meet one of the following conditions in full.

\noindent \textbf{Condition 01:} This is the case when 
\begin{equation}
\label{cond1}
\frac{5 z+1}{5 s-1}=c\in \mathbb{N}^* \text{  and  } \kappa  z-s=q.
\end{equation}
This means that there exists \( \gamma \in \mathbb{N}^* \) such that 
\[
z = (5\gamma - 1)s - \gamma,
\]
and therefore
\[
q = \kappa \big((5\gamma - 1)s - \gamma\big) - s.
\]
Replacing \( (s, \gamma, \kappa) \to (x, y, z) \), the expression for \( q \) 
becomes of the form:
\begin{equation}
\label{p1}
\boxed{p_1(x,y,z) = z (x (5 y-1)-y)-x, \quad x,y,z \in \mathbb{N}^*.}
\end{equation}
Here, \( p_1 \in \mathbb{Z}[x, y, z] \) is a polynomial in three variables with 
integer coefficients; that is, \( \mathbb{Z}[x, y, z] \) denotes the ring of 
polynomials in \( x, y, z \) over the integers.

Thus, equations~\eqref{DE1} is indeed satisfied for $a=5q+1$, when $q=p_1(x,y,z)$, and from Equation~\eqref{erod5} we obtain:
\begin{equation}
\label{p1Erdos}
\begin{aligned}
\frac{5}{5 p_1(x,y,z)+1} &= \frac{5}{5 (z (x (5 y-1)-y)-x)+1} \\
&= \frac{1}{(x (5 y-1)-y) ((5 y-1) z-1) (5 x ((5 y-1) z-1)-5 y z+1)} \\
 &+ \frac{1}{z (x (5 y-1)-y) ((5 y-1) z-1)}+\frac{1}{z (x (5 y-1)-y)}.
\end{aligned}
\end{equation}

\noindent {\bf C.1.1.} Polynomial $p_1$ generate the congruence \(\underline{q \equiv 2 \pmod{12}},\) with $x=1,$ $y=1$, and $z=1+4 x$, $x\geq 0$, which gives
\begin{equation}
\label{p1Case1}
\frac{5}{1+5 \underline{(2+12 x)}}=\frac{1}{3 (1+4 x) (3+16 x)}+\frac{1}{3 (3+16 x) (11+60 x)}+\frac{1}{3 (1+4 x)}.
\end{equation}

\noindent {\bf C.1.2.} Also, polynomial $p_1$ generate the congruence \(\underline{q \equiv 5 \pmod{12}},\) with $x=1,$ $y=1$, and $z=2 + 4 x$, $x\geq 0$. Then we obtain
\begin{equation}
\label{p1Case2}
\frac{5}{1+5\underline{(5+12 x)}}=\frac{1}{6 (1+2 x) (7+16 x)}+\frac{1}{6 (7+16 x) (13+30 x)}+\frac{1}{3 (2+4 x)}.
\end{equation}

\noindent {\bf C.1.3.} $p_1$ also generate the congruence \(\underline{q \equiv 8 \pmod{12}},\) with $x=1,$ $y=1$, and $z=3 + 4 x$, $x\geq 0$. Then we obtain
\begin{equation}
\label{p1Case3}
\frac{5}{1+5\underline{(8+12 x)}}=\frac{1}{3 (3+4 x) (11+16 x)}+\frac{1}{3 (11+16 x) (41+60 x)}+\frac{1}{3 (3+4 x)}.
\end{equation}

\noindent {\bf C.1.4.} $p_1$ also generate the congruence \(\underline{q \equiv 6 \pmod{12}},\) with $x=1,$ $y=2+3 x$, and $z=1$, $x\geq 0$. We obtain
\begin{equation}
\label{p1Case4}
\frac{5}{1+5 \underline{(6+12 x)}}=\frac{1}{(7+12 x) (8+15 x)}+\frac{1}{(7+12 x) (8+15 x) (31+60 x)}+\frac{1}{7+12 x}.
\end{equation}

\noindent {\bf C.1.5.} $p_1$ also generate the congruence \(\underline{q \equiv 10 \pmod{12}},\) with $x=1,$ $y=3+3 x$, and $z=1$, $x\geq 0$. We obtain
\begin{equation}
\label{p1Case5}
\frac{5}{1+5 \underline{(10+12 x)}}=\frac{1}{(11+12 x) (13+15 x)}+\frac{1}{3 (11+12 x) (13+15 x) (17+20 x)}+\frac{1}{11+12 x}.
\end{equation}

\noindent \textbf{Condition 02:} This is the case when 
\begin{equation}
\label{cond2}
\frac{\kappa  (5 z+1)}{5 s-1}=c\in \mathbb{N}^* \text{  and  } \kappa  z-s=q.
\end{equation}

First, we consider \(c = 2\), in this case \eqref{cond2} is fulfilled
by considering the following parameters:
\[
q = (1+3 c_2)+(2+5 c_2) c_1,\quad s = (2+3 c_2)+(3+5 c_2) c_1,\quad z = 1+2 c_2, 
\quad \text{and} \quad \kappa = 3+5 c_1,
\]
with \(c_1,c_2 \geq 0\), relpacing \((c_1,c_2) \to (x-1,y-1)\)
and therefore an expression for \( q \) 
becomes of the form:
\begin{equation}
	\label{p2}
	\boxed{p_2(x,y) = x (5 y-3)-2 y+1, \quad x,y,z \in \mathbb{N}^*.}
\end{equation}
Here, \( p_2 \in \mathbb{Z}[x, y] \) is a polynomial in three variables with 
integer coefficients; that is, \( \mathbb{Z}[x, y] \) denotes the ring of 
polynomials in \( x, y, z \) over the integers.

Thus, equations~\eqref{DE1} is indeed satisfied for $a=5q+1$, with $q=p_2(x,y)$, 
and from Equation~\eqref{erod5} we obtain:
\begin{equation}
	\label{p2Erdos}
	\begin{aligned}
		\frac{5}{5 p_2(x,y)+1} &=\frac{5}{(5 x-2) (5 y-3)} \\
		&= \frac{1}{(5 x-2) \left(10 y^2-11 y+3\right)}+\frac{1}{10 x y-5 x-4 y+2}+\frac{1}{10 x y-5 x-4 y+2}.
	\end{aligned}
\end{equation}

Secondly, we consider \(c = 3\), in this case \eqref{cond2} is fullfilled
by consideriong following parameters:
\[
q = (1+4 c_2)+(3+10 c_2) c_1,\quad s = (1+2 c_2)+(2+5 c_2) c_1,\quad z = 1+3 c_2, 
\quad \text{and} \quad \kappa = 2+5 c_1,
\]
with \(c_1,c_2 \geq 0\), relpacing \((c_1,c_2) \to (x-1,y-1)\)
and therefore an expression for \( q \) 
becomes of the form:
\begin{equation}
	\label{p3}
	\boxed{p_3(x,y) = x (10 y-7)-6 y+4, \quad x,y,z \in \mathbb{N}^*.}
\end{equation}
Here, \( p_3 \in \mathbb{Z}[x, y] \) is a polynomial in three variables with 
integer coefficients; that is, \( \mathbb{Z}[x, y] \) denotes the ring of 
polynomials in \( x, y, z \) over the integers.

Thus, equations~\eqref{DE1} is indeed satisfied for $a=5q+1$, with $q=p_3(x,y)$, 
and from Equation~\eqref{erod5} we obtain:
\begin{equation}
	\label{p3Erdos}
	\begin{aligned}
		\frac{5}{5 p_3(x,y)+1} &=\frac{5}{(5 x-3) (10 y-7)} \\
		&= \frac{1}{2 (5 x-3) \left(30 y^2-41 y+14\right)}+\frac{1}{30 x y-20 x-18 y+12}
		+\frac{1}{15 x y-10 x-9 y+6}.
	\end{aligned}
\end{equation}

\noindent {\bf C.2.1.} Condition~\eqref{cond2} generate the \(\underline{q \equiv 1 \pmod{12}}\), as follows
\begin{equation}
\label{p2Case1}
\frac{5}{1+5 \underline{(1+12 x)}}=\frac{1}{6+60 x}+\frac{1}{3+30 x}+\frac{1}{3+30 x}.
\end{equation}
with $c=2,$ $s=2+18 x$, $z=1$, and $\kappa=3+30 x$, $x\geq 0$.

\noindent {\bf C.2.2.} And \eqref{cond2} generate the \(\underline{q \equiv 9 \pmod{12}}\), as follows
\begin{equation}
\label{p2Case2}
\frac{5}{1+5 \underline{(9+12 x)}}=\frac{1}{46+60 x}+\frac{1}{23+30 x}+\frac{1}{23+30 x},
\end{equation}
with $c=2,$ $s=2 (7+9 x)$, $z=1$, and $\kappa=23+30 x$, $x\geq 0$.

\noindent {\bf C.2.3.} And \eqref{cond2} generate the \(\underline{q \equiv 3 \pmod{12}}\), as follows
\begin{equation}
\label{p2Case3}
\frac{5}{1+5 \underline{(3+12 x)}}=\frac{1}{16+60 x}+\frac{1}{8+30 x}+\frac{1}{8+30 x},
\end{equation}
with $c=2,$ $s=5+18 x$, $z=1$, and $\kappa=8+30 x$, $x\geq 0$.

\noindent {\bf C.2.4.} And \eqref{cond2} generate the \(\underline{q \equiv 11 \pmod{12}}\), as follows
\begin{equation}
\label{p2Case4}
\frac{5}{1+5 \underline{(11+12 x)}}=\frac{1}{56+60 x}+\frac{1}{28+30 x}+\frac{1}{28+30 x},
\end{equation}
with $c=2,$ $s=5+6 x$, $z=1$, and $\kappa=8+10 x$, $x\geq 0$.

\noindent {\bf C.2.5.} And \eqref{cond2} generate the \(\underline{q \equiv 4 \pmod{12}}\), as follows
\begin{equation}
\label{p2Case5}
\frac{5}{1+5 \underline{(4+12 x)}}=\frac{1}{6 (7+20 x)}+\frac{1}{14+40 x}+\frac{1}{7+20 x},
\end{equation}
with $c=3,$ $s=3+8 x$, $z=1$, and $\kappa=7+20 x$, $x\geq 0$.

\noindent {\bf C.2.6.} And \eqref{cond2} generate the \(\underline{q \equiv 7 \pmod{12}}\), as follows
\begin{equation}
\label{p2Case6}
\frac{5}{1+5 \underline{(7+12 x)}}=\frac{1}{8 (3+5 x)}+\frac{1}{24 (3+5 x)}+\frac{1}{4 (3+5 x)},
\end{equation}
with $c=3,$ $s=5+8 x$, $z=1$, and $\kappa=4 (3+5 x)$, $x\geq 0$.

This completes the proof of Statement~\ref{state2} of Theorem \ref{thm2}. 
\end{proof}
Based on the preceding proof, we state the following remark:
\begin{remark}
All cases~{\bf C.1.1–C.1.5} can be reduced to only two representative cases:

\noindent {\bf $\bullet$} In the first case, we substitute \( p_1(1,1,z) \) into~\eqref{p1Erdos} 
to obtain
\[
\frac{5}{5 (3 z-1)+1}=\frac{1}{3 z (4 z-1)}+\frac{1}{3 (4 z-1) (15 z-4)}+\frac{1}{3 z},
\]
Alternatively, by setting \( s = 1 \), \( c = 4 \), \( \kappa = x + 1 \), and \( z = 3 \), we get \( q = 3x + 2 \) (from \eqref{cond1}) 
in~\eqref{erod5}, which leads to a similar result.

\noindent {\bf $\bullet$} In the second case, we substitute \( p_1(1, y, 1) \) into~\eqref{p1Erdos} to obtain
\[
\frac{5}{5 (4 y-2)+1}=\frac{1}{(4 y-1) (5 y-2)}+\frac{1}{(4 y-1) (5 y-2) (20 y-9)}+\frac{1}{4 y-1}.
\]
Alternatively, by setting \( s = 1 \), \( c = 5x + 4 \), \( \kappa = 1 \), and \( z = 4x + 3 \), we get \( q = 4x + 2 \) (from \eqref{cond1}) 
in~\eqref{erod5}, again yielding a similar result.

In a similar manner, all cases~{\bf C.2.1–C.2.6} can be reduced to only two 
representative cases:

\noindent {\bf $\bullet$} In the first case, we set \( c = 2 \), \( s = 3x + 2 \), 
\( \kappa = 5x + 3 \), and \( z = 1 \). From~\eqref{cond2}, we obtain \( q = 2x + 1 \). 
Substituting into~\eqref{p1Erdos}, 
we get
\[
\frac{5}{5 (2 x+1)+1}=\frac{1}{10 x+6}+\frac{1}{5 x+3}+\frac{1}{5 x+3}.
\]
As another approach, setting \( p_2(x,1) = 2x - 1 \) results in a similar expression.

\noindent {\bf $\bullet$} In the second case, we set \( c = 3 \), \( s = 2x + 1 \), 
\( \kappa = 5x + 3 \), 
and \( z = 1 \). From~\eqref{cond2}, we obtain \( q = 3x + 1 \). Substituting 
into~\eqref{p1Erdos}, we get
\[
\frac{5}{5 (3 x+1)+1}=\frac{1}{6 (5 x+2)}+\frac{1}{10 x+4}+\frac{1}{5 x+2}.
\]
Similarly, taking \( p_3(x,1) = 3x - 1 \) gives a parallel result.
\end{remark}

\begin{corollary}
	\label{coro}
	Any prime \(p\) of the form \(5q+1\), with \(q \not\equiv 0 \pmod{12}\), must be expressible using 
	the polynomial \(p_3\) as follows:
	\[
	p = 5\, p_3(x, y, z) + 1 = 5 (z (x (5 y-1)-y)-x)+1.
	\]
\end{corollary}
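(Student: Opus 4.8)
Proof proposal for Corollary~\ref{coro}.

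The plan is to combine an elementary sieve on the residue of $q$ modulo $12$ with the explicit decompositions already produced in the proof of Statement~\ref{state2} of Theorem~\ref{thm2}. First note that the polynomial displayed in the statement, $z(x(5y-1)-y)-x$, is exactly the polynomial $p_1$ of~\eqref{p1}; so the claim to prove is that every prime $p=5q+1$ with $q\not\equiv 0\pmod{12}$ can be written as $p=5\,p_1(x,y,z)+1$ for suitable $x,y,z\in\mathbb{N}^{*}$.

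The first step is to restrict the possible residue classes of $q$. Since $q\ge 1$ we have $p=5q+1\ge 6$, so if $p$ is prime then $p$ is odd and $p\neq 3$. If $q$ were odd, $5q+1$ would be even and $>2$, impossible; hence $q$ is even. If $q\equiv 1\pmod 3$ then $5q+1\equiv 6\equiv 0\pmod 3$ with $5q+1>3$, again impossible; hence $q\not\equiv 1\pmod 3$, i.e.\ $q\not\equiv 4$ and $q\not\equiv 10\pmod{12}$. Together with the hypothesis $q\not\equiv 0\pmod{12}$, the only surviving classes are $q\equiv 2,\ 6,\ 8\pmod{12}$.

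The second step is simply to read off the representation in each surviving class from the proof of Statement~\ref{state2}: case~{\bf C.1.1} treats $q\equiv 2\pmod{12}$ via $(x,y,z)=(1,1,1+4t)$ where $q=2+12t$; case~{\bf C.1.4} treats $q\equiv 6\pmod{12}$ via $(x,y,z)=(1,2+3t,1)$; and case~{\bf C.1.3} treats $q\equiv 8\pmod{12}$ via $(x,y,z)=(1,1,3+4t)$. In each case the triple lies in $\mathbb{N}^{*3}$ and a direct substitution into~\eqref{p1} verifies $q=p_1(x,y,z)$, whence $p=5\,p_1(x,y,z)+1$, as required. (An equivalent route avoids the mod-$3$ step: in the proof of Statement~\ref{state2} every $q\not\equiv 0\pmod{12}$ is handled by exactly one of $p_1,p_2,p_3$, and the branches using $p_2$ or $p_3$ yield $5q+1=(5x-2)(5y-3)$ resp.\ $(5x-3)(10y-7)$ with both factors $\ge 2$, forcing $5q+1$ composite; hence a prime must come from the $p_1$ branch.)

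There is no substantial analytic obstacle here; the only point requiring care is the bookkeeping, namely checking that the sieve in the first step exhausts all residues modulo $12$ and that each surviving class is matched to the correct sub-case {\bf C.1.$\ast$} with a triple whose entries are genuinely positive integers. I would carry out the three substitutions into~\eqref{p1} explicitly to make the matching unambiguous, and I would flag the typographical slip ($p_3$ for $p_1$) in the statement so the formula and the polynomial name agree.
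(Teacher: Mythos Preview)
Your proof is correct and follows the same route the paper intends: the paper's own proof is the single sentence ``This follows from the proof of Statement~\ref{state2} of Theorem~\ref{thm2},'' and you have simply supplied the details that sentence suppresses (the parity/mod-$3$ sieve reducing to $q\equiv 2,6,8\pmod{12}$, and the match with cases~\textbf{C.1.1}, \textbf{C.1.4}, \textbf{C.1.3}), together with the equivalent observation that the $p_2,p_3$ branches factor $5q+1$ nontrivially. Your flag of the typographical slip is also apt: the displayed polynomial is $p_1$ of~\eqref{p1}, not $p_3$.
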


\begin{proof}[{\bf Proof of Corollary~\ref{coro}}]
	This follows from the proof of Statement~\ref{state2} of Theorem~\ref{thm2}.
\end{proof}

\begin{proof}[{\bf Proof of Statement~\ref{state3} of Theorem~\ref{coro}}]
The proof is carried out for each modulo case separately, as follows:

\noindent {\bf {For \(\underline{u \equiv 1 \pmod{7}}\):}} In this case, we replace parameters 
\(s = 2,\) \(z = 7,\) and \(\kappa = 2+12 x\) with \(x \geq 0\) into Equation~\eqref{erod5}, 
we obtain
\begin{equation}
\label{SevenxPlusOne}
\frac{5}{5[12 \underline{(7 x+1)}]+1}=\frac{1}{84 x+14}+\frac{1}{7 (48 x+7) (420 x+61)}+\frac{1}{14 (6 x+1) (48 x+7)}.
\end{equation}
We can also set \( x\to 2,\) \(y \to 1,\) and \(z \to 2+12 x\) into Polynomial~\eqref{p1}  to obtain the same decomposition.

The analysis of this case, and of all cases that follow, 
is based on Lemma~\ref{thm1} together with its proof, 
including Equations~\eqref{erdosh1}--\eqref{erdosh2}.

\noindent {\bf {For \(\underline{u \equiv 2 \pmod{7}}\):}} In this case, we consider
\[
q=p_4(x,y)=-97 + 121 y + 84 x (-4 + 5 y), \quad x,y \in \mathbb{N}^*.
\]
Then
\[
\begin{aligned}
\frac{5}{5p_4(x,y)+1}=\frac{5}{(420 x+121) (5 y-4)}&=\frac{1}{2 (3 x+1) (420 x+121) (5 y-4)} \\
	& +\frac{1}{6 (3 x+1) (28 x+9) (420 x+121) (5 y-4)} \\
	& +\frac{1}{3 (28 x+9) (5 y-4)}.
\end{aligned}
\]
We consider $y=1$, then we obtain
\begin{equation}
	\label{SevenxPlusTwo}
	\frac{5}{5[12 \underline{(7 x+2)}]+1}=\frac{1}{84 x+14}+\frac{1}{7 (48 x+7) (420 x+61)}+\frac{1}{14 (6 x+1) (48 x+7)}.
\end{equation}

\noindent {\bf {For \(\underline{u \equiv 3 \pmod{7}}\):}} In this case, we have the following:
\begin{equation}
	\begin{aligned}
	\label{SevenxPlusThree}
	\frac{5}{5[12 \underline{(7 x+3)}]+1}&=\frac{1}{84 x+39}+\frac{1}{3 (28 x+13) (30 x+13) (420 x+181)} \\
	& +\frac{1}{3 (28 x+13) (30 x+13)}.
	\end{aligned}
\end{equation}

\noindent {\bf {For \(\underline{u \equiv 4 \pmod{7}}\):}} In this case, we get
\begin{equation}
	\label{SevenxPlusFour}
	\begin{aligned}
\frac{5}{5[12 \underline{(7 x+4)}]+1}&=\frac{1}{8820 x^2+10353 x+3038}+\frac{1}{(12 x+7) (105 x+62) (420 x+241)} \\
	& +\frac{1}{84 x+49}.
\end{aligned}
\end{equation}

\noindent {\bf {For \(\underline{u \equiv 5 \pmod{7}}\):}} In this case, we get
\begin{equation}
	\label{SevenxPlusFive}
	\begin{aligned}
\frac{5}{5[12 \underline{(7 x+5)}]+1}& =\frac{1}{2520 x^2+3714 x+1368}+\frac{1}{14 (4 x+3) (60 x+43) (105 x+76)} \\
& +\frac{1}{84 x+63}.
	\end{aligned}
\end{equation}

\noindent {\bf {For \(\underline{u \equiv 6 \pmod{7}}\):}} In this case, we get
\begin{equation}
	\begin{aligned}
	\label{SevenxPlusSix}
\frac{5}{5[12 \underline{(7 x+6)}]+1}& = \frac{1}{2520 x^2+4434 x+1950}+\frac{1}{2 (15 x+13) (28 x+25) (420 x+361)}\\
& + \frac{1}{84 x+75}.
	\end{aligned}
\end{equation}
This concludes the proof of Statement~\ref{state3} of Theorem~\ref{thm2}.
\end{proof}

\begin{proof}[{\bf Proof of Statement~\ref{state4} of Theorem~\ref{thm2}}]
Two cases must be discussed separately:

\noindent {\bf {For \(\underline{v \equiv 1 \pmod{3}}\):}} In this case, we consider
\[
q=p_5(x,y) = 252 x (5 y-4)+421 y-337, \quad x,y \in \mathbb{N}^*.
\]
Then
\[
\begin{aligned}
	\frac{5}{5p_5(x,y)+1}=\frac{5}{(1260 x+421) (5 y-4)}&=\frac{1}{(126 x+43) (140 x+47) (1260 x+421) (5 y-4)} \\
	& + \frac{1}{2 (126 x+43) (140 x+47) (5 y-4)} \\
	& + \frac{1}{2 (126 x+43) (5 y-4)}.
\end{aligned}
\]
We consider $y=1$, then we obtain
\begin{equation}
	\label{ThreexPlusOne}
	\begin{aligned}
		\frac{5}{5[12 [7\underline{(3 x+1)}]]+1}&=\frac{1}{(126 x+43) (140 x+47) (1260 x+421)} \\
		&+\frac{1}{35280 x^2+23884 x+4042}+\frac{1}{252 x+86}.
	\end{aligned}
\end{equation}

\noindent {\bf {For \(\underline{v \equiv 2 \pmod{3}}\):}} In this case, we consider
\[
q=p_6(x,y) = 252 x (5 y-4)+841 y-673, \quad x,y \in \mathbb{N}^*.
\]
Which gives
\[
\begin{aligned}
	\frac{5}{5p_6(x,y)+1}=\frac{5}{(1260 x+841) (5 y-4)}&=\frac{1}{(28 x+19) (1260 x+841) (5 y-4)} \\
	& + \frac{1}{2 (28 x+19) (126 x+85) (1260 x+841) (5 y-4)} \\
	& + \frac{1}{2 (126 x+85) (5 y-4)}.
\end{aligned}
\]
We consider $y=1$, then we obtain
\begin{equation}
	\label{ThreexPlusTwo}
	\begin{aligned}
		\frac{5}{5[12 [7\underline{(3 x+2)}]]+1}&=\frac{1}{(28 x+19) (1260 x+841)} \\
		&+\frac{1}{2 (28 x+19) (126 x+85) (1260 x+841)}+\frac{1}{252 x+170}.
	\end{aligned}
\end{equation}
This concludes the proof of Statement~\ref{state4} of Theorem~\ref{thm2}.
\end{proof}

\section*{Conclusion} In this work, we provide a complete proof of the generalized Erdős--Straus conjecture formulated by Wac\l{}aw Sierpi\'{n}ski in 1956 for all positive integers \(a = 5q + i\),  
where \(i \in \{0,1,2,3,4\}\) and \(q \not\equiv 0 \pmod{252}\) when \(i=1\).  
In addition, we conjecture that there exists a polynomial that generates all integers  
\(q \equiv 0 \pmod{252}\). This conjecture is supported by the construction of explicit  
formulae for the decomposition of \(\frac{5}{a}\).

\appendix
\section{{\it Mathematica} Implementation}\label{additional}

Since the polynomial \(p_1\) already covers all primes of the form \(q\) for \( q \not= 252c_1 \), it remains to verify that it also covers the case when \( q = 252c_1 \). The following program confirms that the polynomial \(p_1 \) generates 
all numbers of the form \( q = 252c_1 \), starting from \( q = 252 \) up to \( q = \text{qMax}\) (consider for example \(\text{qMax} = 40*10^8\)). The search is initially performed over the small range $\{1,2,3\}$ for the variables $x$, $y$, 
or $z$. If no solution is found, the algorithm proceeds to loop over the wider 
range 
\(
4 \leq x \leq \frac{1 + \sqrt{a}}{2}, ~~ a = 5q+1.
\)
\begin{lstlisting}
 ---------------(* Mathematica Input *)}---------------
baseStep = 252;
nStart = 1;
qStart = baseStep*nStart;
qMax = 10^8;
batchSizeQ = 10^7;
cpuCores = $ProcessorCount;

j = Which[cpuCores <= 2, 4, cpuCores <= 4, 8, cpuCores <= 6, 12, 
   cpuCores <= 8, 16, cpuCores <= 12, 24, True, 3 cpuCores];

notebookDir = NotebookDirectory[];
If[notebookDir === Null, 
  Print["Please save the notebook first before running the code."];
  Abort[]];

resultsDir = FileNameJoin[{notebookDir, "Results"}];
If[! DirectoryQ[resultsDir], 
  CreateDirectory[resultsDir, CreateIntermediateDirectories -> True]];

validateSolution[q_, x_, y_, z_] := 
  5 (-x + (-y + x (-1 + 5 y)) z) + 1 == 5 q + 1;

findSolutionForQ[q_] := 
  Module[{a = 5 q + 1, xmax, solution = None, sol, x, y, z}, 
   xmax = Floor[1/2 (Sqrt[a] + 1)];
   (*Step 1:Try x=1,2,3;solve for y,z*)
   Do[Quiet@
     Check[sol = 
       FindInstance[
        5 (-x0 + (-y + x0 (-1 + 5 y)) z) + 1 == a && y > 0 && 
         z > 0, {y, z}, Integers, 1];
      If[sol =!= {}, {y, z} = {y, z} /. First[sol];
       If[validateSolution[q, x0, y, z], solution = {q, x0, y, z}; 
        Break[]]], None], {x0, 1, 3}];
   (*Step 2:Try y=1,2,3;solve for x,z*)
   If[solution === None, 
    Do[Quiet@
      Check[sol = 
        FindInstance[
         5 (-x + (-y0 + x (-1 + 5 y0)) z) + 1 == a && x > 0 && 
          z > 0, {x, z}, Integers, 1];
       If[sol =!= {}, {x, z} = {x, z} /. First[sol];
        If[validateSolution[q, x, y0, z], solution = {q, x, y0, z}; 
         Break[]]], None], {y0, 1, 3}]];
   (*Step 3:Try z=1,2,3;solve for x,y*)
   If[solution === None, 
    Do[Quiet@
      Check[sol = 
        FindInstance[
         5 (-x + (-y + x (-1 + 5 y)) z0) + 1 == a && x > 0 && 
          y > 0, {x, y}, Integers, 1];
       If[sol =!= {}, {x, y} = {x, y} /. First[sol];
        If[validateSolution[q, x, y, z0], solution = {q, x, y, z0}; 
         Break[]]], None], {z0, 1, 3}]];
   (*Step 4:x from 4 to xmax,solve for y,z*)
   If[solution === None, 
    Do[Quiet@
      Check[sol = 
        FindInstance[
         5 (-x0 + (-y + x0 (-1 + 5 y)) z) + 1 == a && y > 0 && 
          z > 0, {y, z}, Integers, 1];
       If[sol =!= {}, {y, z} = {y, z} /. First[sol];
        If[validateSolution[q, x0, y, z], solution = {q, x0, y, z}; 
         Break[]]], None], {x0, 4, xmax}]];
   solution];

processBatch[qBatchMin_, qBatchMax_] := 
  Module[{qValues = Range[qBatchMin, qBatchMax, baseStep], 
    solutions = {}}, 
   solutions = 
    ParallelMap[findSolutionForQ, qValues, 
     Method -> "FinestGrained"];
   DeleteCases[solutions, None]];

Print["CPU cores: ", cpuCores, ", using ", j, 
  " parallel subkernels"];
Print["qStart = ", qStart, ", qMax = ", qMax, ", step = ", baseStep];
Print["Batch size = ", batchSizeQ];

LaunchKernels[];
DistributeDefinitions[baseStep, j, findSolutionForQ, validateSolution,
   processBatch, cpuCores, qMax, batchSizeQ];

allSolutions = {};
allUnsolvedQ = {};
batchCount = Ceiling[(qMax - qStart + 1)/batchSizeQ];

Do[qBatchMin = qStart + batchSizeQ (b - 1);
  qBatchMin = 
   qBatchMin + 
    If[Mod[qBatchMin, baseStep] == 0, 0, 
     baseStep - Mod[qBatchMin, baseStep]];
  qBatchMax = Min[qBatchMin + batchSizeQ - 1, qMax];
  If[qBatchMin > qMax, Break[]];
  Print["\nProcessing batch ", b, "/", batchCount, ": q in [", 
   qBatchMin, ", ", qBatchMax, "]"];
  {timeBatch, batchSolutions} = 
   AbsoluteTiming[processBatch[qBatchMin, qBatchMax]];
  sortedSolutions = SortBy[batchSolutions, First];
  AppendTo[allSolutions, sortedSolutions];
  qAll = 
   Range[qBatchMin, qBatchMax, baseStep];(*removed PrimeQ filter*)
  qSolved = sortedSolutions[[All, 1]];
  qUnsolved = Complement[qAll, qSolved];
  AppendTo[allUnsolvedQ, qUnsolved];
  batchResultsFile = 
   FileNameJoin[{resultsDir, 
     "results_batch" <> IntegerString[b, 10, 3] <> ".csv"}];
  unsolvedFile = 
   FileNameJoin[{resultsDir, 
     "unsolved_batch" <> IntegerString[b, 10, 3] <> ".csv"}];
  Export[batchResultsFile, 
   Prepend[sortedSolutions, {"q", "x", "y", "z"}]];
  Export[unsolvedFile, Prepend[qUnsolved, "q"]];
  Print["Solutions found: ", Length[sortedSolutions]];
  Print["Unsolved q: ", Length[qUnsolved]];
  Print["Time: ", NumberForm[timeBatch, {6, 2}], " sec"];
  Print["Batch ", b, " complete"], {b, 1, batchCount}];

Print["\nProcessing complete!"];
Print["Total solutions found: ", Length[Flatten[allSolutions, 1]]];
Print["Total unsolved q: ", Length[Flatten[allUnsolvedQ]]];

Export[FileNameJoin[{resultsDir, "all_solutions.csv"}], 
  Prepend[Flatten[allSolutions, 1], {"q", "x", "y", "z"}]];
Export[FileNameJoin[{resultsDir, "all_unsolved.csv"}], 
  Prepend[Flatten[allUnsolvedQ], "q"]];

Print["All results saved to: ", resultsDir];
\end{lstlisting}

\begin{Verbatim}[fontsize=\small]
---------------------(* Mathematica Output *)}---------------------
CPU cores: 20, using 60 parallel subkernels
qStart = 252, qMax = 100000000, step = 252
Batch size = 10000000
Processing batch 1/10: q in [252, 10000251]
Solutions found: 39683
Unsolved q: 0
Time: 59.89 sec
Batch 1 complete
Processing batch 2/10: q in [10000368, 20000367]
Solutions found: 39683
Unsolved q: 0
Time: 45.04 sec
Batch 2 complete
Processing batch 3/10: q in [20000484, 30000483]
Solutions found: 39683
Unsolved q: 0
Time: 45.00 sec
Batch 3 complete
Processing batch 4/10: q in [30000348, 40000347]
Solutions found: 39683
Unsolved q: 0
Time: 44.69 sec
Batch 4 complete
Processing batch 5/10: q in [40000464, 50000463]
Solutions found: 39683
Unsolved q: 0
Time: 98.97 sec
Batch 5 complete
Processing batch 6/10: q in [50000328, 60000327]
Solutions found: 39683
Unsolved q: 0
Time: 51.96 sec
Batch 6 complete
Processing batch 7/10: q in [60000444, 70000443]
Solutions found: 39683
Unsolved q: 0
Time: 44.70 sec
Batch 7 complete
Processing batch 8/10: q in [70000308, 80000307]
Solutions found: 39683
Unsolved q: 0
Time: 44.58 sec
Batch 8 complete
Processing batch 9/10: q in [80000424, 90000423]
Solutions found: 39683
Unsolved q: 0
Time: 44.46 sec
Batch 9 complete
Processing batch 10/10: q in [90000288, 100000000]
Solutions found: 39682
Unsolved q: 0
Time: 44.73 sec
Batch 10 complete
Processing complete!
Total solutions found: 396829
Total unsolved q: 0
All results saved to: a folder named ``Results'' located in the 
same directory as the ``.nb'' file.
\end{Verbatim}

\noindent ``Total solutions found: 396829'' represents the number of 
all values \( q \) covered by \( p_1 \) .\\
``Total unsolved q: 0'' means that there is no value of \( q \) 
not covered by \( p_1 \).


\end{document}